\pdfoutput=1
\RequirePackage{ifpdf}
\ifpdf % We~are running pdfTeX in pdf mode
\documentclass[pdftex]{sigma}
\else
\documentclass{sigma}
\fi

\usepackage{titlesec}
\usepackage{thmtools}
\usepackage{thomasmacros}

\declaretheorem[name=Theorem, numberwithin=section]{thm}
\declaretheorem[name=Theorem, numbered=no]{thm*}

\declaretheorem[name=Lemma,numbered=no]{lem*}
\declaretheorem[name=Claim,numbered=no]{claim*}
\declaretheorem[name=Corollary,numberlike=thm]{cor}
\declaretheorem[name=Proposition,numberlike=thm]{prop}

\declaretheorem[name=Notation,numbered=no, style=remark]{nota*}

\declaretheorem[name=Construction,numbered=no, style=remark]{const*}

\declaretheorem[name=Question, numberlike=thm]{que}

\declaretheorem[name=Theorem]{prethm}

\numberwithin{equation}{section}

\usepackage[shortlabels]{enumitem}
\setlist[itemize]{noitemsep}
\setlist[enumerate]{label=\upshape(\arabic*)}
\newlist{myenumi}{enumerate}{1}
\setlist[myenumi,1]{label=\upshape(\roman*)}
\newlist{myenuma}{enumerate}{1}
\setlist[myenuma,1]{label=\upshape(\alph*)}

\usepackage[noabbrev]{cleveref} %
\crefname{figure}{Figure}{Figures}
\crefname{nota}{Notation}{Notations}
\crefname{table}{Table}{Tables}
\crefname{oque}{Open Question}{Open Questions}
\crefname{que}{Question}{Questions}
\crefname{thm}{Theorem}{Theorems}
\crefname{lem}{Lemma}{Lemmas}
\crefname{defi}{Definition}{Definitions}
\crefname{cor}{Corollary}{Corollaries}
\crefname{prop}{Proposition}{Propositions}
\crefname{ex}{Example}{Examples}
\crefname{rem}{Remark}{Remarks}
\crefname{const}{Construction}{Constructions}
\crefname{conj}{Conjecture}{Conjectures}
\crefname{section}{Section}{Sections}
\crefname{chapter}{Chapter}{Chapters}
\crefname{appendix}{Appendix}{Appendices}
\crefdefaultlabelformat{#2\textup{#1}#3}
\creflabelformat{enumi}{(#2#1#3)}
\crefname{prethm}{Theorem}{}
\crefname{precor}{Corollary}{}
\crefname{prelem}{Lemma}{}
\crefname{guide}{Guideline}{}
%
%-- special for this document
% ======================================================================================
\newcommand{\ClSpinBdl}{\mathfrak{S}}
\NewDocumentCommand{\ClDirac}{}{\mathfrak{D}}
\newcommand{\ClDL}{\ClDirac_{\mathcal{L}}}

\newcommand{\DL}{\Dirac_{\mathcal{L}}}
\newcommand{\ue}{u_\epsilon}

\newcommand{\slfrac}[2]{\left.#1\middle/#2\right.}
% ======================================================================================
%

\begin{document}

\allowdisplaybreaks

\newcommand{\arXivNumber}{2411.03882}

\renewcommand{\PaperNumber}{072}

\FirstPageHeading

\ShortArticleName{Ricci-Flat Manifolds, Parallel Spinors and the Rosenberg Index}

\ArticleName{Ricci-Flat Manifolds, Parallel Spinors\\ and the Rosenberg Index}

\Author{Thomas TONY}

\AuthorNameForHeading{T.~Tony}

\Address{Institute of Mathematics, University of Potsdam, Germany}
\Email{\href{mailto:math@ttony.eu}{math@ttony.eu}}
\URLaddress{\url{https://www.ttony.eu}}

\ArticleDates{Received May 19, 2025, in final form August 21, 2025; Published online August 25, 2025}

\Abstract{Every closed connected Riemannian spin manifold of non-zero $\hat{A}$-genus or non-zero Hitchin invariant with non-negative scalar curvature admits a parallel spinor, in particular is Ricci-flat. In this note, we generalize this result to closed connected spin manifolds of non-vanishing Rosenberg index. This provides a criterion for the existence of a parallel spinor on a finite covering and yields that every closed connected Ricci-flat spin manifold of dimension~$\geq 2$ with non-vanishing Rosenberg index has special holonomy.}

\Keywords{Ricci-flat manifolds; special holonomy; parallel spinor; scalar curvature; higher index theory}

\Classification{53C29; 58J20; 53C21; 58B34}

\section{Introduction and main results}

Let~$\brackets{M,g}$ be a connected Riemannian manifold of dimension~$n$ whose universal cover~$\tilde{M}$ admits a spin structure. We denote by~$\tilde{g}$ the pullback metric of~$g$ along the covering map~$\tilde{M}\to M$.
If $\bigl(\tilde{M},\tilde{g}\bigr)$ carries a parallel spinor, $g$ is Ricci-flat, i.e., its Ricci tensor vanishes identically. The converse of this statement holds for closed connected Riemannian spin manifolds with non-zero $\hat{A}$-genus or more general non-zero Hitchin invariant \cite[Section~4.2]{Hitchin1974}. This is a direct consequence of the Schr\"odinger--Lichnerowicz formula and the non-triviality of the kernel of the spin Dirac operator. It is an open question whether the converse of this statement holds in general.

\begin{que} \label{OpenQuestion}
 If $g$ is Ricci-flat, does~$\bigl(\tilde{M},\tilde{g}\bigr)$ admit a non-trivial parallel spinor?
\end{que}

In this note, we give a positive answer to Question~\ref{OpenQuestion} for closed connected spin manifolds with non-vanishing Rosenberg index (see Theorem~\ref{MainTheoremA}). As a consequence, in dimension~$\geq 2$ these manifolds have special holonomy for any Ricci-flat metric (see Corollary~\ref{CorollayB}).

The Rosenberg index of a closed spin manifold is a class in the real K-theory of the maximal group $\Cstar$-algebra of the fundamental group of the manifold. It is defined as the higher index of the $\ComplexCl_n$-linear spin Dirac operator twisted by the Mishchenko bundle (see Section~\ref{SectionRosenbergIndex}). In general, the Rosenberg index is not only non-vanishing whenever the $\hat{A}$-genus or the Hitchin invariant is non-zero, but also for many other classes of manifolds, as for example enlargeable manifolds~\cite{Hanke2006, Hanke2007}, and aspherical manifolds whose fundamental group satisfies the Novikov conjecture~\cite{Rosenberg1983}.
\begin{prethm} \label{MainTheoremA}
 Let~$\brackets{M,g}$ be a closed connected Riemannian spin manifold with non-vanishing Rosenberg index. Then the following statements are equivalent:
 \begin{enumerate}\itemsep=0pt
 \item \label{(1)}
 The Riemannian manifold~$\brackets{M,g}$ is Ricci-flat.
 \item \label{(2)}
 The universal cover~$\tilde{M}$, equipped with the pullback metric, admits a non-trivial parallel spinor.
 \item \label{(3)}
 There exists a finite Riemannian covering carrying a non-trivial parallel spinor with respect to the pullback spin structure.
 \item \label{(4)}
 The metric~$g$ has non-negative scalar curvature.
 \end{enumerate}
\end{prethm}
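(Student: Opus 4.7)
I would establish the cycle $(4)\Rightarrow(2)\Rightarrow(3)\Rightarrow(1)\Rightarrow(4)$, concentrating all the new content in the implication $(4)\Rightarrow(2)$. Three of the four implications are largely routine: $(1)\Rightarrow(4)$ is immediate from the definition of Ricci-flatness; $(3)\Rightarrow(1)$ follows from the classical fact that a parallel spinor forces Ricci-flatness, which descends to any finite quotient; and $(2)\Rightarrow(3)$ is a consequence of the Cheeger--Gromoll splitting theorem together with Bieberbach's structure theorem. Indeed, a closed Ricci-flat $M$ admits a finite normal cover isometric to $T^k \times N$ with $N$ compact simply connected Ricci-flat, and after passing to a further finite cover to make the spin structure on the torus factor bounding, the parallel spinor on $\tilde M = \mathbb{R}^k \times \tilde N$ descends to this cover.

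The heart of the theorem is $(4)\Rightarrow(2)$. I would first argue that $\mathrm{scal}_g \equiv 0$: if $\mathrm{scal}_g \geq 0$ were not identically zero, the Kazdan--Warner theorem would produce a conformally equivalent metric of strictly positive scalar curvature; the Lichnerowicz--Rosenberg argument applied to $\ClDL$ for such a metric would then give a spectral gap of $\ClDL^2$ above zero and force the (topologically invariant) Rosenberg index to vanish, contradicting the hypothesis. Next, with $\mathrm{scal}_g \equiv 0$, the Schr\"odinger--Lichnerowicz formula on the flat twist $\ClSpinBdl M \tensgr \mathcal{L}$ reduces to $\ClDL^2 = \nabla^* \nabla$, so every element of $\ker \ClDL$ is a parallel section. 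Since $M$ is closed, $\ClDL$ is a regular self-adjoint $\mathrm{Cl}_n$-linear operator on a finitely generated projective Hilbert $C^*$-module whose kernel is a complemented direct summand, and the Rosenberg index is represented by $[\ker \ClDL]$; the non-vanishing of that index therefore yields a non-trivial parallel section $\Phi \in \Gamma(\ClSpinBdl M \tensgr \mathcal{L})$.

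To complete $(4)\Rightarrow(2)$, I would transfer $\Phi$ to a parallel spinor on $\tilde M$. Pulled back to the universal cover the Mishchenko bundle trivializes as $\tilde M \times C^*_{\max,\mathbb R}(\pi_1 M)$, and $\Phi$ becomes a $\pi_1 M$-equivariant parallel section $\tilde\Phi$ of $\ClSpinBdl \tilde M \tensgr C^*_{\max,\mathbb R}(\pi_1 M)$. Parallelism together with connectedness of $\tilde M$ ensures that $\tilde\Phi$ is nowhere vanishing; pairing with any continuous real-linear functional on $C^*_{\max,\mathbb R}(\pi_1 M)$ nonvanishing at $\tilde\Phi(p)$ for some chosen $p$ produces the desired non-trivial classical parallel spinor on $\tilde M$.

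The main obstacle should lie in the K-theoretic step above: verifying, in the absence of a universal lower spectral gap of $\ClDL$, that $\ker \ClDL$ really is a complemented Hilbert submodule whose K-theory class in $\mathrm{KO}_n(C^*_{\max,\mathbb R}(\pi_1 M))$ recovers the Rosenberg index. This should follow by exploiting $\ClDL^2 = \nabla^* \nabla$ together with the ellipticity of $\nabla^*\nabla$ on the compact base $M$ to obtain the Fredholm/regularity properties of $\ClDL$ as a regular $C^*$-module operator. The final extraction of a concrete parallel spinor on $\tilde M$ and the spin-structure adjustments in $(2)\Rightarrow(3)$ are conceptually clear but technical.
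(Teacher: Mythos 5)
The implications $(1)\Rightarrow(4)$, $(3)\Rightarrow(1)$, and your $(2)\Rightarrow(3)$ are all fine, and your observation that $\operatorname{scal}_g\equiv 0$ under hypothesis $(4)$ is also correct. The trouble is concentrated precisely where you flag it, in the step extracting a non-trivial element of $\ker\ClDL$ from the non-vanishing of the Rosenberg index, and the gap there is real and cannot be repaired the way you propose.

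Over a general $\Cstar$-algebra, a regular self-adjoint operator on a Hilbert $\Cstar$-module with $T^2-1$ compact does \emph{not} have complemented kernel, and the higher index class is \emph{not} of the form $[\ker\ClDL]-[\ker\ClDL^*]$: the Rosenberg index is defined via the spectral/Kasparov picture precisely because the Fredholm picture fails in this setting. Non-vanishing of the index shows only that $\ClDL$ is not invertible, i.e.\ $0\in\operatorname{spec}(\ClDL)$, and on a Hilbert $\Cstar\pi$-module a point in the spectrum need not be an eigenvalue. Invoking $\ClDL^2=\nabla^*\nabla$ and ellipticity does not help: elliptic regularity over $\Cstar$-algebras gives the generalized Rellich theorem and smoothness of sections in the kernel, but it does not produce a spectral gap or force the kernel to be a finitely generated projective summand. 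The paper makes this point explicitly (``a non-vanishing higher index such as the Rosenberg index does not give rise to a non-trivial harmonic spinor''), and the whole architecture of its proof is designed to route around it.

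The route the paper takes is: first establish Ricci-flatness from $(4)$ (your Kazdan--Warner reduction is a fine alternative to the Bourguignon argument cited there), then invoke the Fischer--Wolf structure theorem to conclude that $\pi_1(M)$ contains $\Z^q$ with finite index $d$, and then apply the Schick--Wraith detection principle (Proposition~\ref{PropDetectionPrinciple}): for such $\pi$, $0\in\operatorname{spec}(\DL)$ can be detected after tensoring by a \emph{finite-dimensional} unitary representation $\rho\colon\pi\to U(d)$. The resulting twist $\Dirac_V$ with $V=\tilde M\times_\rho\C^d$ acts on a genuine Hilbert space over a compact manifold, so its spectrum is discrete and non-invertibility really does produce a harmonic spinor; the Schr\"odinger--Lichnerowicz argument then makes it parallel. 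Without the structure theorem (hence without knowing Ricci-flatness in advance) the detection principle is unavailable, which is why the paper proves $(4)\Rightarrow(1)$ as a preliminary step rather than as a consequence of $(2)$. Your proposed order $(4)\Rightarrow(2)$ therefore skips the very input needed to make the spectral step work.

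One smaller issue: even granting a non-trivial parallel $\Phi$ of $\tilde{\SpinBdl}\tensgr\Cstar\pi$ on $\tilde M$, pairing with a continuous real-linear functional on $\Cstar\pi$ is not obviously compatible with the $\pi$-equivariance you built in, and you still need some positivity/nondegeneracy to guarantee that the resulting scalar-valued parallel spinor is non-zero. In the paper this extraction is made cleanly because the twist is finite-dimensional and trivializes on $\tilde M$ as $(\tilde{\SpinBdl})^d$, so one simply takes a non-zero component.
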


The main part of the proof of Theorem~\ref{MainTheoremA} is to show the implication \ref{(4)}$\Rightarrow$\ref{(3)}. The proof strategy is as follows. Suppose there exists a non-trivial twisted harmonic spinor. Then, by the Schr\"odinger--Lichnerowicz formula and the assumption on the scalar curvature, this twisted spinor is already parallel. We obtain a non-trivial parallel spinor on the universal covering of~$M$, which yields on a suitable finite covering a non-trivial parallel spinor with respect to the pullback spin structure. In the following paragraph, we outline the construction of such a~twisted harmonic spinor under the assumption~\ref{(4)} in Theorem~\ref{MainTheoremA}.

A~non-zero classical index such as the $\hat{A}$-genus or the Hitchin invariant gives rise to a non-trivial harmonic spinor. In general, a non-vanishing higher index such as the Rosenberg index does not give rise to a non-trivial harmonic spinor. But non-vanishing of the Rosenberg index is still an obstruction~--- so far the best obstruction based on the index theory of Dirac operators~--- to the existence of a positive scalar curvature metric on a closed connected spin manifold. A~classical rigidity statement by Bourguignon states that every closed Riemannian manifold that does not admit a positive scalar curvature metric is Ricci-flat if the scalar curvature is non-negative (see, e.g.,~\cite{Kazdan1975a}). Combining these two facts yield under assumption~\ref{(4)} that~$\brackets{M,g}$ is Ricci-flat. By the \textit{structure theorem for Ricci-flat manifolds} (see Theorem~\ref{ThmStructureTheorem}), the fundamental group of~$M$ contains the subgroup~$\Z^q$ with finite index, and it is possible to detect the non-vanishing Rosenberg index by the spin Dirac operator twisted by some flat finite-dimensional Hermitian vector bundle (see Proposition~\ref{PropDetectionPrinciple}). This detection principle we are using here is due to Schick and Wraith \cite{Schick2021a}, while an alternative approach is given in \cite[Section 3]{Ramras2013}. Here Ramras, Willett, and Yu \cite{Ramras2013} study the class of groups for which an associated non-vanishing higher index can be detected by a finite representation.

We now discuss the relation between Theorem~\ref{MainTheoremA} and special holonomy. A Riemannian manifold is called \textit{irreducible} if the representation of the reduced holonomy group on the general
linear group of the tangent space is irreducible. By Berger's holonomy list \cite{Berger1955}, the reduced holonomy group of any connected irreducible Ricci-flat manifold of dimension~$n$ is given by~$\SO\brackets{n}$ in the generic case, or one of the following subgroups:
$\SU\brackets{\slfrac{n}{2}}$ (locally Calabi--Yau), $\Sp\brackets{\slfrac{n}{4}}$ (locally hyper-K\"ahler), $ G_2 $ or $ \Spin\brackets{7}$. In dimension~$\geq 2$, the assumption that the Riemannian manifold is irreducible excludes all flat manifolds, in particular all Ricci-flat locally symmetric spaces \cite[Theorems 10.72 and 7.61]{Besse1987}. Note that the subgroups~$U\brackets{n}$ (locally K\"ahler) and~$\Sp\brackets{\slfrac{n}{4}}\cdot\Sp\brackets{1}$ (locally quaternionic K\"ahler) cannot occur as holonomy groups of Ricci-flat manifolds \cite[Proposition~10.29 and Theorem~14.45]{Besse1987}. While there exist examples of closed locally Calabi–Yau, locally hyper-K\"ahler, $G_2$, and $\mathrm{Spin}(7)$ manifolds, no closed irreducible Ricci-flat manifold with generic reduced holonomy is known. This leads to the following question.
\begin{que} \label{Question2}
 Does every closed connected Ricci-flat manifold of dimension~$n \geq 2$ have special holonomy, i.e., its reduced holonomy group is a proper subgroup of~$\SO\brackets{n}$?
\end{que}
The \textit{de Rham splitting theorem} states that every complete simply-connected Riemannian manifold is isometric to a~Riemannian product of complete non-flat irreducible manifolds and a~Euclidean space \cite[Theorem~10.43]{Besse1987}. We obtain that for reducible manifolds and flat manifolds the conclusion in Question~\ref{Question2} holds. Since every Ricci-flat manifold of dimension~$\leq 3$ is already flat, this includes all manifolds of dimension~$2$ and~$3$. In dimension~$\geq 4$, Question~\ref{Question2} is still open. It is related to spin geometry as follows. Every complete simply-connected irreducible Riemannian spin manifold~$\bigl(\tilde{M},\tilde{g}\bigr)$ admits a parallel spinor if and only if it has special holonomy (see \cite[Theorem~1.2]{Hitchin1974} and \cite{Wang1989}). We obtain by Theorem~\ref{MainTheoremA} the following corollary, which addresses Question~\ref{Question2} in the special case of spin manifolds with non-vanishing Rosenberg index.
\begin{cor} \label{CorollayB}
 Every closed connected Ricci-flat spin manifold of dimension~$\geq 2$ with non-vanishing Rosenberg index has special holonomy.
\end{cor}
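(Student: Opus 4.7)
The plan is to combine Theorem~\ref{MainTheoremA} with the de Rham splitting theorem recalled in the introduction and the Hitchin--Wang theorem on irreducible Riemannian spin manifolds that admit parallel spinors. Since the reduced holonomy group of~$\brackets{M,g}$ coincides with the holonomy group of its universal cover~$\bigl(\tilde M, \tilde g\bigr)$, it suffices to show that $\operatorname{Hol}\bigl(\tilde M\bigr)$ is a proper subgroup of~$\SO\brackets{n}$.

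First I would invoke Theorem~\ref{MainTheoremA} to obtain a non-trivial parallel spinor on~$\bigl(\tilde M, \tilde g\bigr)$, the universal cover being spin since~$M$ is. The de Rham splitting theorem then decomposes~$\bigl(\tilde M, \tilde g\bigr)$ isometrically as~$\R^k \times \brackets{N_1, h_1} \times \cdots \times \brackets{N_r, h_r}$ with each~$N_i$ complete, simply-connected, non-flat and irreducible of dimension~$n_i$, and yields a holonomy factorisation $\operatorname{Hol}\bigl(\tilde M\bigr) = \operatorname{Hol}\brackets{N_1} \times \cdots \times \operatorname{Hol}\brackets{N_r}$ sitting block-diagonally in $\SO\brackets{n_1} \times \cdots \times \SO\brackets{n_r} \hookrightarrow \SO\brackets{n}$, with the Euclidean factor contributing trivial holonomy. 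Because the spinor bundle of a Riemannian product is, up to grading, the tensor product of the spinor bundles of the factors, and because the invariants of a tensor product representation of a product group factorise as the tensor product of the invariants, the existence of a non-trivial parallel spinor on~$\tilde M$ forces each factor~$N_i$ to admit a non-trivial parallel spinor.

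Applying the Hitchin--Wang theorem cited in the excerpt I would then obtain $\operatorname{Hol}\brackets{N_i} \subsetneq \SO\brackets{n_i}$ for every~$i$. The dimension assumption~$n \geq 2$ now guarantees that $\operatorname{Hol}\bigl(\tilde M\bigr)$ is properly contained in~$\SO\brackets{n}$ in every case: if $k \geq 1$, the Euclidean factor already forces the block-diagonal inclusion to be proper; if $k = 0$ and $r \geq 2$, properness follows for the same block-diagonal reason; and if $k = 0$ and $r = 1$, the claim is exactly the special holonomy statement for~$N_1$. The step I expect to require the most care is the descent of the parallel spinor from the product to its irreducible factors; it rests on combining the tensor product structure of the spinor representation with the de Rham factorisation of the holonomy, but once that is in place the rest of the argument is a direct reading of Theorem~\ref{MainTheoremA} and Wang's list.
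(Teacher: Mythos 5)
Your proof is correct and follows the same route the paper implicitly takes: Theorem~\ref{MainTheoremA}, implication \ref{(1)}$\Rightarrow$\ref{(2)}, yields a non-trivial parallel spinor on~$\tilde{M}$, the de~Rham splitting handles the reducible and flat cases directly via the block-diagonal holonomy, and the Hitchin--Wang theorem covers the irreducible case. One small remark: the descent of the parallel spinor to each factor~$N_i$, which you flag as the delicate step, is in fact superfluous for this corollary, since in the reducible cases ($k\geq 1$ or $r\geq 2$) the block-diagonal containment already gives special holonomy without any spinorial input on the factors, while in the remaining case ($k=0$, $r=1$) there is nothing to descend.
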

\section{The Rosenberg index} \label{SectionRosenbergIndex}
In this section, we give a brief introduction to higher index theory with the goal to define the Rosenberg index \cite{Rosenberg1983}. See \cite{Lawson1989} for the construction of the ($\ComplexCl_n$-linear) spinor bundle, \cite{Lance1995} and \cite{Ebert2016} for some background concerning Hilbert $\Cstar$-modules and differential operators acting on Hilbert $\Cstar$-module bundles, \cite[Definition~2.1]{Tony2025} for a definition of graded real Dirac bundles carrying a~Hilbert $\Cstar$-module structure, and \cite{Roe} for an introduction to real K-theory of $\Cstar$-algebras and in particular for a~definition of the spectral picture of real K-theory.

Let~$\brackets{M,g}$ be a closed connected Riemannian spin manifold of dimension~$n$. This means that the manifold is oriented and equipped with a spin structure $\Spin\brackets{M,g}$, which is a lift of the $\SO\brackets{n}$-principal bundle of oriented orthonormal frames along the double covering $\Spin\brackets{n}\to \SO\brackets{n}$. The spin structure gives rise to the following two complex vector bundles
\begin{alignat*}{3}
& \SpinBdl\coloneqq \Spin\brackets{M,g}\times_{\rho} \Delta,\qquad&&
\rho\colon\ \Spin\brackets{n}\to \Aut\brackets{\Delta}\qquad
\text{complex spin representation},& \\
& \ClSpinBdl\coloneqq \Spin\brackets{M,g}\times_{\operatorname{cl}} \ComplexCl_n,\qquad&&
\operatorname{cl}\colon\ \Spin\brackets{n}\to \Aut\brackets{\ComplexCl_n}\qquad
\text{left multiplication}.
\end{alignat*}
They are called the \textit{irreducible spinor bundle} and the $\ComplexCl_n$-\textit{linear spinor bundle}, respectively. Here we denote by~$\ComplexCl_n$ the complexification of the Clifford algebra of~$\R^n$ which carries the structure of a graded real $\Cstar$-algebra and a graded real Hilbert $\ComplexCl_n$-module. The irreducible spinor bundle~$\SpinBdl$ equipped with its natural metric, connection and Clifford multiplication has the structure of a Dirac bundle. The $\ComplexCl_n$-linear spinor bundle~$\ClSpinBdl$, equipped with the grading, real structure and Hilbert module structure obtained fiber-wise from~$\ComplexCl_n$, the induced $\ComplexCl_n$-linear connection, and the Clifford multiplication, carries the structure of a graded real $\ComplexCl_n$-linear Dirac bundle. Note that~$\ClSpinBdl$ is isomorphic as a Dirac bundle to a direct sum of copies of the irreducible spinor bundle~$\SpinBdl$.

We denote by~$\pi$ the fundamental group of the manifold~$M$ and by $\Cstar \pi$ the maximal group $\Cstar$-algebra of~$\pi$. The \textit{Mishchenko bundle}~$\mathcal{L}$ is the associated bundle of the universal cover of~$M$ by the representation $l \colon \pi \to \Aut\brackets{\Cstar \pi}$ given by left multiplication \cite{Mishchenko1980}. It carries a natural flat connection, a $\Cstar \pi$-valued metric and a real structure. Tensoring the $\ComplexCl_n$-linear spinor bundle by the Mishchenko bundle yields a graded real $\ComplexCl_n\otimes \Cstar \pi$-linear Dirac bundle. The induced~${\ComplexCl_n \otimes \Cstar \pi}$-linear Dirac operator~$\ClDirac_{\mathcal{L}}$ extends to a self-adjoint and regular unbounded operator
\[
 \ClDL\colon\ \Hsp{1}{M,\ClSpinBdl\otimes \mathcal{L}} \To{} \Ltwo{M,\ClSpinBdl\otimes \mathcal{L}}.
\]
Here the Sobolev spaces~$\Hsp{1}{M,\ClSpinBdl\otimes \mathcal{L}}$ and $\Ltwo{M,\ClSpinBdl\otimes \mathcal{L}}$ are countably generated Hilbert $\ComplexCl_n\otimes \Cstar \pi$-modules which are similarly defined as the corresponding Sobolev spaces in the classical case. Since the manifold~$M$ is compact, the generalized Rellich theorem \cite[Theorem~2.33]{Ebert2016} yields that the functional calculus for unbounded regular and self-adjoint operators on Hilbert $\Cstar$-modules \cite[Theorem~1.19]{Ebert2016} restricts to
\begin{gather} \label{FunctionalCalculus}
 \Phi_{\ClDirac_{\mathcal{L}}}\colon\ \Cz{\R} \To{} \Kom_{\ComplexCl_n\otimes \Cstar \pi} \brackets{\Ltwo{M,\ClSpinBdl\otimes \mathcal{L}}}.
\end{gather}
This is stated in \cite[Proposition~2.34]{Ebert2016} with $g\equiv 1$ and~$X$ a point. Here~$\Kom_{\A}\brackets{E}$ denotes the compact operators on a Hilbert $\A$-module~$E$ for a~$\Cstar$-algebra~$\A$. The map~$\Phi_{\ClDirac_{\mathcal{L}}}$ defines a class in the spectral picture of the $n$-th real K-theory group of~$\Cstar \pi$ (see, e.g.,~\cite[Lecture 13]{Roe}). It is called the \textit{Rosenberg index} of the manifold~$M$.

\section{Proof of the main theorem} \label{SectionProofThmA}

In this section, we give a proof of Theorem~\ref{MainTheoremA}. It is based on the structure theorem for Ricci-flat manifolds \cite[Theorem~4.5]{Fischer1975}, which is a direct consequence of the Cheeger--Gromoll splitting theorem \cite{Cheeger1971}, and a detection principle for the spectrum of Hilbert $\Cstar \pi$-modules by Schick and Wraith \cite{Schick2021a}.

\begin{thm}[structure theorem for Ricci-flat manifolds] \label{ThmStructureTheorem}
 Every closed connected Ricci-flat manifold~$M$ admits a finite normal Riemannian covering $N\times T^q \to M$ consisting of a closed simply-connected Ricci-flat manifold~$N$ and a flat torus $T^q$. In particular, the fundamental group of~$M$ contains the subgroup~$\Z^q$ with finite index.
\end{thm}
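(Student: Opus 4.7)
The plan is to lift everything to the universal cover $\tilde{M}$, use the Cheeger--Gromoll splitting theorem to obtain an isometric decomposition of $\tilde{M}$ with a maximal Euclidean factor, and then apply Bieberbach's structure theorem to the induced action of the deck transformation group on that Euclidean factor to extract the desired finite covering.

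First, since $\bigl(\tilde{M},\tilde{g}\bigr)$ is a complete simply-connected Ricci-flat Riemannian manifold, I would iteratively apply the Cheeger--Gromoll splitting theorem: a complete manifold of nonnegative Ricci curvature containing a line splits isometrically as a Riemannian product with an $\R$ factor. After finitely many steps one obtains an isometric decomposition $\tilde{M} \cong N \times \R^q$ in which $N$ is a complete simply-connected Ricci-flat Riemannian manifold containing no line. A de~Rham-type rigidity result then shows that the Euclidean factor is canonically distinguished: every isometry of such a product splits as a pair $(\phi_N,\phi_E) \in \mathrm{Isom}(N) \times \mathrm{Isom}\bigl(\R^q\bigr)$. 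In particular, the deck transformation group $\pi = \pi_1(M)$ acts diagonally, yielding a homomorphism $p\colon \pi \to \mathrm{Isom}\bigl(\R^q\bigr) = O(q) \ltimes \R^q$.

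Next, I would analyse the image $\Gamma := p(\pi)$. Because $\pi$ acts properly discontinuously and cocompactly on $\tilde{M}$, and because $N$ admits no line (so no nontrivial one-parameter family of translations contributing to any direction of the Euclidean factor), one shows that $\Gamma$ is a discrete subgroup acting cocompactly on $\R^q$, that is, a crystallographic (Bieberbach) group. Bieberbach's first theorem then supplies a characteristic finite-index subgroup $\Gamma_0 \cong \Z^q \leq \Gamma$ consisting of pure translations. Let $\pi_0'$ be the preimage $p^{-1}(\Gamma_0)$; its action on the $N$-factor gives a further homomorphism $\pi_0' \to \mathrm{Isom}(N)$. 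I would argue that $N$ must be compact (else the cocompact action on $\tilde{M}$ cannot be reconciled with the translation lattice on $\R^q$), whence $\mathrm{Isom}(N)$ is a compact Lie group, the image of $\pi_0'$ in it is finite by proper discontinuity of the total action, and passing to its kernel yields a finite-index normal subgroup $\pi_0 \trianglelefteq \pi$ acting trivially on $N$ and by the translation lattice $\Z^q$ on $\R^q$. The quotient $\tilde{M}/\pi_0$ is then isometric to $N \times T^q$ with $T^q = \R^q/\Z^q$, and the induced map $N \times T^q \to M$ is a finite normal Riemannian covering. The inclusion $\Z^q \hookrightarrow \pi$ of finite index is then immediate.

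The main obstacle is the second step: simultaneously trivialising the action on $N$ and reducing the action on $\R^q$ to pure translations after a finite cover. This rests on two delicate points, namely the canonical nature of the maximal Euclidean de~Rham factor (to guarantee that deck transformations split) and the compactness of $N$ together with properness of the $\pi$-action (to ensure that the residual $N$-action has finite image). Once these are in hand, the construction of the finite cover $N \times T^q \to M$ follows formally.
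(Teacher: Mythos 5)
The paper does not prove this statement itself; it quotes it from Fischer--Wolf \cite{Fischer1975} as a consequence of the Cheeger--Gromoll splitting theorem, and your outline is exactly that standard proof: split $\tilde{M}\cong N\times\R^q$ off the lines, use that isometries of the product respect the maximal Euclidean de~Rham factor to get $p\colon\pi\to \mathrm{Isom}\bigl(\R^q\bigr)$, apply Bieberbach, and kill the residual action on $N$ after a further finite-index pass. So the architecture is right. There is, however, one step whose stated justification is wrong: the claim that the image of $\pi_0'$ in $\mathrm{Isom}(N)$ is finite ``by proper discontinuity of the total action.'' Proper discontinuity of the action on $N\times\R^q$ puts no constraint whatsoever on the $N$-components of elements whose $\R^q$-components are large translations: for any isometry $\phi$ of a compact $N$, of arbitrary (even infinite) order, the group generated by $(\phi,\,t\mapsto t+1)$ acts freely and properly discontinuously on $N\times\R$ (the quotient is the mapping torus of $\phi$). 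So this argument cannot produce the finite-index subgroup acting trivially on $N$.

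The correct ingredient here is Bochner's theorem: on a compact Ricci-flat manifold every Killing field is parallel, and a nonzero parallel vector field on the complete simply-connected $N$ would produce lines (equivalently an $\R$-factor in its de~Rham decomposition), which $N$ has none of by construction. Hence $\mathrm{Isom}(N)$ has trivial Lie algebra and, being compact, is \emph{finite}; only then does the kernel of $\pi_0'\to\mathrm{Isom}(N)$ have finite index, giving the normal subgroup $\pi_0$ with $\tilde{M}/\pi_0\cong N\times T^q$. Relatedly, your parenthetical argument for the compactness of $N$ is not yet a proof: compactness of the non-Euclidean factor is a genuine part of the Cheeger--Gromoll theorem for compact $M$ (one shows that a ray in a noncompact $N$ could be translated by deck transformations, using cocompactness, to produce a line in $N$, contradicting maximality of the splitting). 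Both points are standard and fillable, but as written the second step of your proposal does not close.
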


\begin{prop}[{\cite[Proposition~3.6]{Schick2021a}}] \label{PropDetectionPrinciple}
Let~$\pi$ be a group containing the subgroup~$\Z^q$ with finite index~$d$, and let~$\mathcal{B}$ be a possibly unbounded self-adjoint and regular operator on a countably generated Hilbert $\Cstar \pi$-module. Suppose $T^2-1$ is compact in the sense of Hilbert $\Cstar \pi$-modules for the bounded transform \smash{$T\coloneqq \mathcal{B}\brackets{\mathcal{B}^2+1}^{-1/2}$}. If $0$ is in the spectrum of~$\mathcal{B}$, then there exists a~representation $\rho\colon \pi \to U\brackets{d}$ such that $0$ is in the spectrum of $\mathcal{B}\otimes_{\rho} \id$.
\end{prop}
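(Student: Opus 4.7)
The plan is to use the Gelfand isomorphism $\Cstar \Z^q \cong C\brackets{T^q}$, where $T^q = \widehat{\Z}^q$ is the Pontryagin dual torus, in order to fibre the Hilbert module over~$T^q$, locate a point $x_0 \in T^q$ at which~$0$ appears in the fibrewise spectrum, and then promote~$x_0$ to a $d$-dimensional unitary representation of~$\pi$ by inducing the corresponding character from~$\Z^q$.

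First I would restrict scalars from $\Cstar \pi$ to $\Cstar \Z^q$. Because $\Z^q$ has finite index~$d$ in~$\pi$, averaging over a transversal produces a conditional expectation $\Cstar \pi \to \Cstar \Z^q$, and through this expectation the underlying Hilbert $\Cstar \pi$-module~$\mathcal{E}$ becomes a countably generated Hilbert $\Cstar \Z^q$-module. The operator~$\mathcal{B}$ remains self-adjoint and regular for the restricted structure, and compactness of $T^2-1$ is preserved since restriction of scalars along a faithful expectation sends compacts to compacts. Under $\Cstar \Z^q \cong C\brackets{T^q}$, the module~$\mathcal{E}$ is identified with a continuous field of Hilbert spaces $\brackets{\mathcal{H}_x}_{x\in T^q}$ and~$\mathcal{B}$ with a continuous family $\brackets{\mathcal{B}_x}_{x \in T^q}$ of self-adjoint operators, each of which has compact resolvent and hence purely discrete spectrum.

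Next I would verify that $\mathrm{spec}\brackets{\mathcal{B}}$ coincides with the closure of $\bigcup_{x \in T^q} \mathrm{spec}\brackets{\mathcal{B}_x}$. Combined with the hypothesis $0 \in \mathrm{spec}\brackets{\mathcal{B}}$, a bump-function approximation in the functional calculus then yields a point $x_0 \in T^q$ with $0 \in \mathrm{spec}\brackets{\mathcal{B}_{x_0}}$. Writing $\chi \colon \Z^q \to U\brackets{1}$ for the character corresponding to~$x_0$, I set $\rho \coloneqq \mathrm{Ind}_{\Z^q}^{\pi} \chi \colon \pi \to U\brackets{d}$. A Hilbert-module form of Frobenius reciprocity produces an isometric isomorphism $\mathcal{H}_{x_0} \cong \mathcal{E}\tens_\rho \mathbb{C}^d$ intertwining~$\mathcal{B}_{x_0}$ with $\mathcal{B} \tens_\rho \id$, whence $0 \in \mathrm{spec}\brackets{\mathcal{B} \tens_\rho \id}$, as required.

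The step I expect to be the main obstacle is constructing the conditional expectation $\Cstar \pi \to \Cstar \Z^q$ for the \emph{maximal} group $\Cstar$-algebra (in the reduced case it is standard, but~$\Z^q$ need not be normal in~$\pi$ here, so one cannot simply pass to a normal finite-index subgroup without enlarging~$d$) and checking that the Frobenius identification $\mathcal{H}_{x_0} \cong \mathcal{E} \tens_\rho \mathbb{C}^d$ is compatible with the Hilbert-$\Cstar$-module compact-operator functional calculus. For the expectation one can exploit the amenability of~$\Z^q$, which forces the maximal and reduced norms on $\Cstar \Z^q$ to coincide, and define the averaging on a dense subalgebra before extending by continuity. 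The compatibility should then follow from a direct bimodule computation once $\Cstar \pi$ is recognised as a finitely generated projective right $\Cstar \Z^q$-module of rank~$d$.
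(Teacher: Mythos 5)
The paper does not prove this proposition; it is imported verbatim from Schick--Wraith \cite[Proposition~3.6]{Schick2021a}, so there is no internal proof to compare against. Your proposal essentially reconstructs the argument of that source: decompose over the Pontryagin dual $\widehat{\Z^q}\cong T^q$ using the compactness of $T^2-1$ to get fibrewise discrete spectra, locate a character $\chi_{x_0}$ at which $0$ survives, and induce it up to $\pi$. The ``Frobenius'' identification you worry about is in fact unproblematic: it is associativity of interior tensor products (Rieffel induction), since restricting scalars along the canonical expectation $E\colon\Cstar\pi\to\Cstar\Z^q$ is tensoring with the bimodule $\brackets{\Cstar\pi, E\brackets{a^*b}}$, and localizing that bimodule at $\chi_{x_0}$ gives exactly the $d$-dimensional Hilbert space of $\operatorname{Ind}_{\Z^q}^{\pi}\chi_{x_0}$.

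The one genuinely shaky point is your justification for the conditional expectation on the \emph{maximal} completion. Amenability of $\Z^q$ tells you $\Cstar\Z^q\cong C\brackets{T^q}$, but it gives no control on the maximal norm of $\pi$ upstairs, so ``define the averaging on a dense subalgebra and extend by continuity'' is not yet an argument: continuity of $\mathbb{C}\bigl[\pi\bigr]\to\mathbb{C}\bigl[\Z^q\bigr]$ for the maximal norm on the domain is exactly what needs proving. The correct mechanism is the finite-index induction homomorphism $\iota\colon\Cstar\pi\to M_d\brackets{\Cstar\Z^q}$ (well defined because $\operatorname{Ind}_{\Z^q}^{\pi}\sigma$ is a representation of $\pi$ for every representation $\sigma$ of $\Z^q$, hence bounded by the maximal norm, and injective because every representation of $\pi$ embeds in one induced from the finite-index subgroup); the $\brackets{e,e}$-corner of $\iota$ is the desired expectation, and the coset representatives form a quasi-basis, which also yields the faithfulness you need for the restricted inner product to be definite and for compacts to restrict to compacts. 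With that substitution, and a short semicontinuity-plus-compactness argument for the claim $\operatorname{spec}\brackets{\mathcal{B}}=\overline{\bigcup_{x}\operatorname{spec}\brackets{\mathcal{B}_x}}$ (which is where the compactness of $T^2-1$ and of $T^q$ enter), your proof goes through and coincides in substance with the cited one, which runs the same construction in contrapositive form.
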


\begin{proof}[Proof of Theorem~\ref{MainTheoremA}]
Let~$\brackets{M,g}$ be a closed connected Riemannian spin manifold of dimension~$n$ with non-vanishing Rosenberg index. We denote as in Section~\ref{SectionRosenbergIndex} the fundamental group of~$M$ by~$\pi$, the universal cover of~$M$ by~$\tilde{M}$, the Mishchenko bundle by~$\mathcal{L}$, the irreducible spinor bundle by~$\SpinBdl$, the $\ComplexCl_n$-linear spinor bundle by~$\ClSpinBdl$, and the induced Dirac operators by $\Dirac$ and $\ClDirac$, respectively. Moreover, we equip the universal cover of~$M$ by the pullback metric~$\tilde{g}$ and denote its irreducible spinor bundle by~$\tilde{\SpinBdl}$.

Since a parallel spinor on a finite covering of~$M$ lifts to a parallel spinor on the universal covering of~$M$ and the scalar curvature is by definition the trace of the Ricci curvature, the implications \ref{(3)}$\Rightarrow$\ref{(2)} and \ref{(1)}$\Rightarrow$\ref{(4)} are trivial. Suppose there exists a non-trivial parallel spinor~$u$ on~$\tilde{M}$. The curvature tensor~$K$ of the spinor bundle~$\tilde{\SpinBdl}$ is related to the Ricci curvature via
\begin{equation} \label{RicCurvatureRealtion}
 \Ric_{\tilde{g}} \brackets{X} \cdot u = -2 \sum_{j=1}^n e_j\cdot K_{X,e_j}u
\end{equation}
for any local orthonormal frame $e_1,\dots,e_n$ of~$T\tilde{M}|_U$ and any vector field~$X$ supported in an open subset~$U$ \cite[Section 3.1]{Friedrich2000}. Since~$u$ is non-trivial and parallel, this yields that~$\tilde{M}$ is Ricci-flat, hence~$M$ is Ricci-flat. This proves the implication \ref{(2)}$\Rightarrow$\ref{(1)}. It remains to show the implication \ref{(4)}$\Rightarrow$\ref{(3)}. Suppose the scalar curvature of the Riemannian manifold~$\brackets{M,g}$ is non-negative. We~have to show that there exists a finite Riemannian covering of~$M$ together with a parallel spinor with respect to the pullback spin structure. The proof splits into the following four steps.

{\it Step $1$: The Riemannian manifold~$\brackets{M,g}$ is Ricci-flat.} %\label{Step1}
Since the Rosenberg index does not vanish, the manifold~$M$ does not admit a positive scalar curvature metric. Since the scalar curvature of~$M$ is non-negative, it follows by Ricci flow or a classical result by Bourguignon~\cite{Kazdan1975a} that the manifold~$M$ is Ricci-flat. A new proof using only spinorial techniques is given by the author in \cite[Sections~3 and~4]{Tony2025} and will be briefly sketched here.

The non-vanishing Rosenberg index gives rise to a family of almost $\ClDL$-harmonic sections $\{\ue\}_{\epsilon>0}$, i.e., $\ue\in \Cinfty{M,\ClSpinBdl\otimes \mathcal{L}}$, $\Ltwonorm{\ue}=1$ and $\Ltwonorm[\large]{\ClDL^j \ue}<\epsilon$ for all $j>0$ and all $\epsilon>0$ \cite[Lemma~3.2]{Tony2025}. Since the scalar curvature of~$M$ is non-negative, we obtain by the Schr\"odinger--Lichnerowicz formula $\Ltwonorm{\nabla \ue}<\epsilon$ for all $\epsilon>0$. We can improve this estimate by Moser iteration~\cite[Lemma~3.4~ff.]{Tony2025} and obtain $\Inftynorm{\nabla \ue}\lesssim \epsilon^r$ for all $\epsilon\in \brackets{0,1}$ and a suitable positive constant $r$ which is independent of~$\epsilon$. By the Poincar\'e inequality, we obtain that the family is almost constant, i.e.,
\[
 \norm[\big]{\bar{u}_{\epsilon}-\scalarproduct[\big]{\ue\brackets{p}}{\ue\brackets{p}}}_{\ComplexCl_n\otimes \Cstar \pi}\lesssim \epsilon^r \qquad \text{with} \quad
 \bar{u}_\epsilon \coloneqq \frac{1}{\vol\brackets{M}}\int_M \scalarproduct[\big]{\ue\brackets{q}}{\ue\brackets{q}} \intmathd q,
\]
for all $\epsilon \in \brackets{0,1}$ and all $p\in M$. Furthermore, we can choose the family $\{\ue\}_{\epsilon>0}$ such that $\bigl\|\nabla^2\ue\bigr\|\lesssim \sqrt{\epsilon}$ holds for all $\epsilon>0$ \cite[Lemma~4.3]{Tony2025}. Together with equation~\eqref{RicCurvatureRealtion}, we obtain
\begin{equation} \label{EquationRicciFlat}
 \Ltwonorm{\Ric \brackets{X}} \lesssim \Ltwonorm{\Ric \brackets{X}\cdot \ue} \lesssim \sqrt{\epsilon}
\end{equation}
for any vector field~$X$ on~$M$ and all sufficiently small $\epsilon>0$, hence~$M$ is Ricci-flat.

{\it Step $2$: There exists a flat Hermitian bundle over~$M$ carrying a non-zero twisted parallel~spinor.} %\label{Step2}
Since the manifold~$M$ is Ricci-flat by Step~1, we can apply Theorem~\ref{ThmStructureTheorem} and the fundamental group of~$M$ contains the subgroup $\Z^q$ with finite index~$d$ for some~$q$ and~$d$. The Rosenberg index of~$M$ does not vanish by assumption, hence the twisted Dirac operator
\[
\ClDL\colon\ \Hsp{1}{M,\ClSpinBdl\otimes \mathcal{L}} \To{} \Ltwo{M,\ClSpinBdl\otimes \mathcal{L}}
\]
is not invertible (see, e.g., \cite[Lemma 2.4\,(ii)]{Tony2025}). Since the $\ComplexCl_n$-linear spinor bundle~$\ClSpinBdl$ is a direct sum of copies of the irreducible spinor bundle~$\SpinBdl$, we obtain that the twisted Dirac operator
\[
\DL\colon\ \Hsp{1}{M,\SpinBdl\otimes \mathcal{L}} \To{} \Ltwo{M,\SpinBdl\otimes \mathcal{L}}
\]
is also not invertible. The functional calculus associated to~$\DL$ restricts as in equation~\ref{FunctionalCalculus}, hence the operator $T^2-1$ is compact for the bounded transform \smash{$T\coloneqq \DL\brackets{\DL^2+1}^{-1/2}$}, and we can apply Proposition~\ref{PropDetectionPrinciple}. This yields a representation $\rho\colon \pi\to U\brackets{d}$ such that $\DL\otimes_{\rho} \id$ is not invertible. Note that the representation~$\rho$ extends by the universal property of the maximal group $\Cstar$-algebra to a representation $\overline{\rho}\colon \Cstar \pi \to \GL_d\brackets{\C}$. We define~$V$ as the flat Hermitian bundle $\tilde{M}\times_{\rho} \C ^d$. Following the identifications
\[
\brackets{\SpinBdl\otimes \mathcal{L}} \otimes_{\overline{\rho}} \C^d\cong\SpinBdl\otimes
\bigl(\tilde{M} \times_{\pi} \Cstar \pi \otimes_{\overline{\rho}} \C^d\bigr) \cong \SpinBdl \otimes V,
\]
we obtain $\DL\otimes_{\overline{\rho}} \id=\Dirac_V$ (compare \cite[Proposition~3.1]{Schick2021a}), hence the twisted Dirac operator~$\Dirac_V$ is not invertible. Since the manifold~$M$ is closed and the twisting bundle~$V$ has finite-dimensional fibers, the spectrum of~$\Dirac_V$ consists of eigenvalues only, hence the kernel of $\Dirac_V$ is non-zero. Let~$u$ be a non-trivial element in the kernel of~$\Dirac_V$. Since the scalar curvature of~$M$ is non-negative by assumption, we obtain by the Schr\"odinger--Lichnerowicz formula
\[
 \Ltwonorm{\nabla u}^2\leq \Ltwoscalarproduct{\nabla^*\nabla u}{u}+\tfrac{1}{4}\Ltwoscalarproduct{\scal_g u}{u}= \Ltwonorm{\Dirac_V u}^2=0.
\]
This shows that~$u$ is parallel, and the second step is proved.

{\it Step $3$: There exists a non-trivial parallel spinor on~$\tilde{M}$ equipped with the pullback metric.} %\label{Step3}
This step corresponds to \cite[Proposition~3.3]{Schick2021a}. The proof is given below for the sake of completeness. Let~$V$ be the flat Hermitian bundle over~$M$ and~$u$ be the twisted parallel spinor constructed in Step~2. We denote by~$\tilde{V}$ and~$\tilde{u}$ the pullback of~$V$ and~$u$ to the universal covering of~$M$, respectively. By construction the twisted spinor $\tilde{u}$ is parallel with respect to the pullback connection. Since the covering map $\tilde{M}\to M$ is a local isometry, hence preserves all local structures, the pullback connection on~$\tilde{M}$ equals the usual spinor connection twisted with the pullback connection on~$\tilde{V}$. Since $V$ is by definition in Step~2 equal to $\tilde{M}\times_{\rho} \C^d$ with the representation $\rho\colon \pi \to U\brackets{d}$, we obtain
\[
 \tilde{V}=\tilde{M}\times_{\tilde{\rho}}\C^d\cong\tilde{M}\times \C^d \qquad \text{with} \quad \tilde{\rho}\colon\ \{1\}=\pi_1\brackets[\big]{\tilde{M}}\To{} \pi \To{\rho} U\brackets{d}.
\]
This yields an identification \smash{$\tilde{\SpinBdl}\otimes \tilde{V} \cong \bigl(\tilde{\SpinBdl}\bigr)^d$} as bundles with connection, hence the parallel twisted spinor $\tilde{u}$ can be identified with a vector of~$d$ parallel spinors on~$\tilde{M}$. Since~$\tilde{u}$ is non-trivial, at least one component of~$\tilde{u}$ is non-trivial and we obtain a non-trivial parallel spinor on~$\tilde{M}$. This proves the third step.

{\it Step $4$: There exists a finite Riemannian covering of~$M$ together with a non-trivial parallel spinor with respect to the pullback spin structure.} %\label{Step4}
In the first part of the argument bellow, we follow the approach taken in the proof of \cite[Lemma 2.7]{Schick2021a}. By the structure theorem for Ricci-flat manifolds (see Theorem~\ref{ThmStructureTheorem}), there exists a finite Riemannian covering~$\brackets{N,g_N}\times \brackets{T^q,g_{\text{fl}}} \to \brackets{M,g}$ where~$\brackets{N,g_N}$ is a simply-connected Ricci-flat manifold and~$T^q$ the $q$-torus equipped with a flat metric~$g_{\text{fl}}$. This yields that the universal covering of~$M$ is isometric to $\brackets{N,g_N}\times \brackets{\R^q,\tilde{g}_{\text{fl}}}$. Note that a Riemannian product admits a non-trivial parallel spinor if and only if each of its factors does. Since the universal covering of~$M$ carries a non-trivial parallel spinor by Step~3, $\brackets{N,g_N}$ carries a non-trivial parallel spinor. It now suffices to show that a suitable double covering of the $q$-torus equipped with a flat metric and the spin structure pulled back from~$M$ carries a non-trivial parallel spinor.

Since the $q$-torus is parallelizable and equipped with a flat metric, the $\SO\brackets{q}$-principal bundle of orthonormal frames~$\SO\brackets{T^q}$ can be trivialized such that the trivial connection coincides with the Levi-Civita connection. Let~$s$ be the corresponding global section of~$\SO\brackets{T^q}$. Note that double coverings over a connected manifold~$X$ are classified by homotopy classes of homomorphisms~${\pi_1\brackets{X}\to \Z_2}$ \cite[Section 1.3]{Hatcher2001}.

Let~$G$ be $\set{0}$, $\Z$ or~$\Z_2$ for~$q=1$, $q=2$ or~$q>2$, respectively. By the trivialization of $\SO\brackets{T^q}$, we obtain an isomorphism $\pi_1\brackets{\SO\brackets{T^q}}\cong \Z^q\times G$ and the spin structure on~$T^q$, which is a certain double cover of~$\SO\brackets{T^q}$, is classified by some map
\[
 \brackets{f_1,f_2}\colon\ \Z^q\times G \to \Z_2.
\]
For~$n>1$, the condition to be a spin principal bundle yields~$f_2\neq 0$ \cite[Section~1, Corollary 1.5]{Lawson1989}.
The isomorphism $\pi_1\brackets{T^q}\cong \Z^q$ together with the map~$f_1\colon \Z^q \to \Z_2$ gives rise to a double covering~$\bar{T}^q \to T^q$. By construction, the pullback of the spin structure of~$T^q$ along the map~${\bar{T}^q\to T^q}$, denoted by ~$\Spin\bigl(\bar{T}^q\bigr)$, is classified by the map $\brackets{0,f_2}\colon \Z^q\times G \to \Z_2$. Therefore, the pullback of the section~$s$ along the double covering~$\bar{T}^q\to T^q$ lifts to a section of~$\Spin\bigl(\bar{T}^q\bigr)$, hence yields a~trivialization of the $\Spin\brackets{q}$-principal bundle of~$\bar{T}^q$.

Since the trivial connection on the trivialization of~$\SO\bigl(\bar{T}^q\bigr)$ coincides with the Levi-Civita connection and we just lifted this trivialization, the trivial connection on the trivialization of~$\Spin\bigl(\bar{T}^q\bigr)$ also coincides with the Levi-Civita connection. It follows that the constant sections of the associated trivialized spinor bundle over~$\bar{T}^q$ are parallel, and Theorem~\ref{MainTheoremA} is proved.
\end{proof}

\subsection*{Acknowledgements}

I thank Bernd Ammann for coming up with this interesting question and his hospitality during my short-term visit in Regensburg at the SFB 1085 Higher Invariants. I also thank my advisor Rudolf Zeidler for his continuous support, as well as Thorsten Hertl and the anonymous referees for their valuable comments.

Funded by the European Union (ERC Starting Grant 101116001~-- COMSCAL)\footnote{Views and opinions expressed are however those of the author(s) only and do not necessarily reflect those of the European Union or the European Research Council. Neither the European Union nor the granting authority can be held responsible for them.}
 and by the Deutsche Forschungsgemeinschaft (DFG, German Research Foundation)~-- Project-ID 427320536 -- SFB 1442, as well as under Germany’s Excellence Strategy EXC 2044 390685587, Mathematics M\"unster: Dynamics--Geometry--Structure.

\pdfbookmark[1]{References}{ref}
\LastPageEnding

\end{document}